\begin{document}

\newcommand{\mmbox}[1]{\mbox{${#1}$}}
\newcommand{\proj}[1]{\mmbox{{\mathbb P}^{#1}}}
\newcommand{\Cr}{C^r(\Delta)}
\newcommand{\CR}{C^r(\hat\Delta)}
\newcommand{\affine}[1]{\mmbox{{\mathbb A}^{#1}}}
\newcommand{\Ann}[1]{\mmbox{{\rm Ann}({#1})}}
\newcommand{\caps}[3]{\mmbox{{#1}_{#2} \cap \ldots \cap {#1}_{#3}}}
\newcommand{\N}{{\mathbb N}}
\newcommand{\Z}{{\mathbb Z}}
\newcommand{\R}{{\mathbb R}}
\newcommand{\Tor}{\mathop{\rm Tor}\nolimits}
\newcommand{\Ext}{\mathop{\rm Ext}\nolimits}
\newcommand{\Hom}{\mathop{\rm Hom}\nolimits}
\newcommand{\im}{\mathop{\rm Im}\nolimits}
\newcommand{\rank}{\mathop{\rm rank}\nolimits}
\newcommand{\supp}{\mathop{\rm supp}\nolimits}
\newcommand{\arrow}[1]{\stackrel{#1}{\longrightarrow}}
\newcommand{\CB}{Cayley-Bacharach}
\newcommand{\coker}{\mathop{\rm coker}\nolimits}
\sloppy
\newtheorem{defn0}{Definition}[section]
\newtheorem{prop0}[defn0]{Proposition}
\newtheorem{quest0}[defn0]{Question}
\newtheorem{thm0}[defn0]{Theorem}
\newtheorem{lem0}[defn0]{Lemma}
\newtheorem{corollary0}[defn0]{Corollary}
\newtheorem{example0}[defn0]{Example}
\newtheorem{remark0}[defn0]{Remark}

\newenvironment{defn}{\begin{defn0}}{\end{defn0}}
\newenvironment{prop}{\begin{prop0}}{\end{prop0}}
\newenvironment{quest}{\begin{quest0}}{\end{quest0}}
\newenvironment{thm}{\begin{thm0}}{\end{thm0}}
\newenvironment{lem}{\begin{lem0}}{\end{lem0}}
\newenvironment{cor}{\begin{corollary0}}{\end{corollary0}}
\newenvironment{exm}{\begin{example0}\rm}{\end{example0}}
\newenvironment{rem}{\begin{remark0}\rm}{\end{remark0}}

\newcommand{\defref}[1]{Definition~\ref{#1}}
\newcommand{\propref}[1]{Proposition~\ref{#1}}
\newcommand{\thmref}[1]{Theorem~\ref{#1}}
\newcommand{\lemref}[1]{Lemma~\ref{#1}}
\newcommand{\corref}[1]{Corollary~\ref{#1}}
\newcommand{\exref}[1]{Example~\ref{#1}}
\newcommand{\secref}[1]{Section~\ref{#1}}
\newcommand{\remref}[1]{Remark~\ref{#1}}
\newcommand{\questref}[1]{Question~\ref{#1}}

\newcommand{\std}{Gr\"{o}bner}
\newcommand{\jq}{J_{Q}}


\title{Finding Inverse Systems from Coordinates}
\author{Stefan O. Tohaneanu}
\thanks{Author's address: Department of Mathematics, University of Idaho, Moscow, Idaho 83844, USA; Email: tohaneanu@uidaho.edu}

\subjclass[2010]{Primary 13N10; Secondary: 13H10, 14C05} \keywords{Artinian Gorenstein ring, Macaulay inverse system, zero-dimensional scheme}

\begin{abstract}
Let $I$ be a homogeneous ideal in $R=\mathbb K[x_0,\ldots,x_n]$, such that $R/I$ is an Artinian Gorenstein ring. A famous theorem of Macaulay says that in this instance $I$ is the ideal of polynomial differential operators with constant coefficients that cancel the same homogeneous polynomial $F$. A major question related to this result is to be able to describe $F$ in terms of the ideal $I$. In this note we give a partial answer to this question, by analyzing the case when $I$ is the Artinian reduction of the ideal of a reduced (arithmetically) Gorenstein zero-dimensional scheme $\Gamma\subset\mathbb P^n$. We obtain $F$ from the coordinates of the points of $\Gamma$. \end{abstract}
\maketitle

\section{Introduction}

Let $\mathbb K$ be a field of characteristic zero, and let $R=\mathbb K[x_0,\ldots,x_n]$ be the ring of homogeneous polynomials with coefficients in $\mathbb K$. Let $I\subset R$ be a homogeneous ideal.

The ring $R/I$ is called {\em Artinian} if $R/I$ is a finite dimensional vector space over $\mathbb K$. Equivalently, there exists a positive integer $d>0$ such that $(R/I)_d=0$ (i.e., every homogeneous polynomial of degree $d$ is an element of $I$).

An Artinian ring $R/I$ is called {\em Gorenstein} if the {\em socle} $$Soc(R/I):=\{a\in R/I \,|\, \langle x_0,\ldots,x_n\rangle a=0\}$$ is a 1-dimensional $\mathbb K-$vector space. If $s+1$ is the least integer such that $(R/I)_{s+1}=0$ and if $R/I$ is Gorenstein, then $Soc(R/I)=(R/I)_s$ and therefore, ${\rm dim}(R/I)_s=1$. In this instance $s$ is called the {\em socle degree} of $R/I$.

An {\em arithmetically Gorenstein} scheme means a projective scheme whose coordinate ring localized at any of its prime ideals is a local Gorenstein ring (i.e., it is a local ring that is Cohen-Macaulay and the canonical module is free of rank 1). In terms of the graded minimal free resolution, if $X\subset\mathbb P^n$ is a $d-$dimensional scheme with defining ideal $I_X$, then $X$ is arithmetically Gorenstein if and only if $R/I_X$ has the graded minimal free resolution as an $R-$module: $$0\rightarrow F_k\rightarrow\cdots\rightarrow F_1\rightarrow R\rightarrow R/I_X\rightarrow 0,$$ where $k=n-d$ and $F_k\simeq R(-\alpha)$. $\alpha-k$ will be called also the {\em socle degree} of $X$, for obvious reasons, and coincides with the Castelnuovo-Mumford regularity $reg(R/I_X)$ of $X$.

\medskip

A famous theorem of Macaulay (\cite{Ma}), known as {\em Macaulay's Inverse System Theorem}, states that the Artinian ring $R/I$ is Gorenstein if and only if $I$ is the ideal of a system of homogeneous polynomial differential operators with constant coefficients having a unique solution. More precisely, let $S=\mathbb K[y_0,\ldots,y_n]$ be the homogeneous polynomial ring with coefficients in $\mathbb K$ and variables $y_0,\ldots,y_n$. $R$ acts on $S$ by $$x_0^{i_0}\cdots x_n^{i_n}\circ y_0^{j_0}\cdots y_n^{j_n}= \frac{\partial^{i_0+\cdots+i_n}}{\partial y_0^{i_0}\cdots \partial y_n^{i_n}}(y_0^{j_0}\cdots y_n^{j_n}),$$ extended by linearity. Then $R/I$ is Artinian Gorenstein if and only if $I=Ann(F):=\{f\in R | f\circ F=0\}$, for some $F\in S$ (we are going to denote the elements in $S$ with capital letters). The best surveys on applications of inverse systems and also very good introductions to this subject are \cite{Ge} and \cite{IaKa}, and the citations therein.

The polynomial $F$ is roughly what is known as the inverse system of the Artinian Gorenstein ideal $I$. In general, for any ideal $I$, {\em the inverse system} of $I$, is by definition: $I^{-1}:=\oplus_j (I^{-1})_j$, where $$(I^{-1})_j:=\{G\in S_j|f\circ G=0,\mbox{ for any }f\in I_j\}.$$ If $I=Ann(F)$, then $(I^{-1})_j$ is the $\mathbb K-$vector space spanned by the partial derivatives of order $\deg(F)-j$ of $F$.

One of the questions in the field is to determine $F$ from the ideal $I$. It is not known how the shape of $F$ makes the distinction between Artinian complete intersections and Artinian Gorenstein rings, as the first class is included in the second. On this idea, a question asked by Tony Geramita is if one can determine $F$ from the minimal generators of an Artinian complete intersection ideal $I$, at the same time making this distinction. More generally, one would want to determine $I^{-1}$ from $I$ and conversely. Cho and Iarrobino (\cite{ChIa}) made some progress on this direction when $I$ is the defining ideal of a zero-dimensional scheme in $\mathbb P^n$, saturated (\cite[Proposition 1.13]{ChIa}) or locally Gorenstein (\cite[Theorem 3.3]{ChIa}). These results concern finding $I^{-1}$ from the generators of $I$ and conversely, finding the generators of $I$ from $I^{-1}$.

Our notes follow the same direction: we determine $F$ for the case when $I$ is the Artinian reduction\footnote{By Artinian reduction we understand Artinian reduction by a general linear form.} of the ideal of a zero-dimensional reduced Gorenstein scheme (i.e., a Gorenstein set of points). Our Theorem \ref{thm:main_theorem} shows that $F$ is determined uniquely from the homogeneous coordinates of the points that form this scheme: $F=\sum c_i L_i^r$, where $L_i$ is the dual form of each point $P_i$ in this set, $r$ is the regularity, and $c_i$ are nonzero constants, the sum being taken over all points in the set. This result resembles to \cite[Theorem 3.8]{BrCoMoTs}, the difference being that for a Gorenstein set of points, we specify who are the constants $c_i$ in the decomposition $F=\sum c_i L_i^r$, and our approach is more homological than computational. \footnote{ Our main result in a way determines the inverse system from $V(I)$, rather than from $I$. A converse to this approach means to solve systems of multivariate polynomials that have a zero-dimensional saturated solution $V(I)$, by using inverse systems $I^{-1}$ (see \cite{MoPa} for a detailed analysis).}

Inverse systems occur naturally in the theory of systems of PDE's with constant coefficients, and similar results to the ones obtained via commutative algebraic methods appeared in the literature from this direction of study (see \cite{Re}). \cite{St} determines the dimension of $I^{-1}$, thought also as the solution of such systems of PDE's, in the generic case and when $R/I$ is Artinian. Also, inverse systems are put to great use in the theory of splines approximation (e.g., \cite{GeSc}), and also in the study of Weak Lefschetz Property of Artinian algebras (e.g., \cite{MiMiNa2}, \cite{HaSeSc}). Concerning the later, it would be interesting to see if our main result can bring some insights towards answering \cite[Question 3.8]{MiNa2}. Also, one should mention the application of inverse systems in `The Waring's Problem'' and tensor decompositions (see \cite{Ge}, \cite{IaKa}, or recent progress in \cite{Br}, or \cite{BrCoMoTs}).

\section{Inverse systems of Artinian reductions of reduced zero-dimensional Gorenstein schemes}

In this section we present and prove the main theorem. The proof makes use of two classical results in the theory of inverse systems and Gorenstein algebras.

\medskip

$\bullet$ \textbf{Emsalem-Iarrobino Theorem} (\cite[Theorems IIA and IIB]{EnIa}). In our situation of reduced zero-dimensional schemes this theorem says the following (\cite[Theorem I]{EnIa}): suppose $Z=\{P_1,\ldots,P_m\}\subset \mathbb P^n$, and let $L_i\in S$ be the associated (dual) linear form of $P_i$. Then $(I_Z^{-1})_j={\rm Span}_{\mathbb K}\langle L_1^j,\ldots,L_m^j\rangle$, and the Hilbert function satisfies $HF(R/I_Z,j)=\dim_{\mathbb K}(I_Z^{-1})_j$.

\medskip

$\bullet$ \textbf{Davis-Geramita-Orecchia Theorem} (\cite[Theorem 5]{DaGeOr}). As stated in \cite[Theorem 1]{MiNa}, the theorem is the following: a reduced set of points $Z$ is arithmetically Gorenstein if and only if its $h-$vector is symmetric and it has the Cayley-Bacharach property.

The {\em $h-$vector} of $Z$ is the vector of the Hilbert function values of the Artinian reduction of $I_Z$, the ideal of $Z$. One says that $Z$ has {\em the Cayley-Bacharach property} if for every subset $Y\subset Z$ of cardinality $|Z|-1$, one has $HF(R/I_Y,s-1)=HF(R/I_Z,s-1)$, where $s$ is the last degree where the $h-$vector of $Z$ is non-zero.

\medskip

We also use a corollary to the {\bf Kustin-Ulrich Socle Lemma} (\cite[Lemma 1.1]{KuUl}). For an Artinian ring $R/I$ as seen in the introduction, denote $s(R/I)$ the minimum positive integer $d$ such that $(R/I)_{d+1}=0$ and $(R/I)_d\neq 0$. In case $R/I$ is Gorenstein, $s(R/I)$ is the socle degree of $R/I$. The corollary is the following:

\begin{lem}\label{lem:simple_lemma} Let $I$ and $J$ be two ideals in $R$, such that $I\subseteq J$, and $R/I$ and $R/J$ are both Artinian Gorenstein rings. If $s(R/I)=s(R/J)$, then $I=J$.
\end{lem}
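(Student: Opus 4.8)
The plan is to prove Lemma~\ref{lem:simple_lemma} by a short socle-dimension argument, using the Kustin--Ulrich Socle Lemma as the engine. Since $I \subseteq J$, there is a surjection $\pi\colon R/I \twoheadrightarrow R/J$. Set $K = \ker\pi = J/I$, so we have a short exact sequence of graded $R/I$-modules
\[
0 \longrightarrow K \longrightarrow R/I \longrightarrow R/J \longrightarrow 0.
\]
The goal is to show $K = 0$, which is equivalent to $I = J$. Suppose for contradiction that $K \neq 0$.

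The key point is to locate a nonzero socle element of $K$ in a degree that is too high. Since $R/J$ is Artinian Gorenstein with socle degree $s(R/J) = s(R/I) =: s$, we have $(R/J)_d = 0$ for all $d > s$, so $K_d = (R/I)_d$ for all $d > s$; in particular, if $K$ had any nonzero component in degree $> s$ it would force $(R/I)_s \ne 0$ to sit below nonzero higher components, contradicting that $s$ is also the top degree of $R/I$. Hence $K$ is concentrated in degrees $\le s$. Now $K$ is a nonzero submodule of $R/I$, and any nonzero Artinian module has nonzero socle, so $\mathrm{Soc}(K) \neq 0$; moreover $\mathrm{Soc}(K) \subseteq \mathrm{Soc}(R/I)$ because the socle of a submodule is contained in the socle of the ambient module. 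But $R/I$ is Gorenstein, so $\mathrm{Soc}(R/I) = (R/I)_s$ is one-dimensional and concentrated in degree exactly $s$. Therefore $\mathrm{Soc}(K)$ is one-dimensional and lives in degree $s$, which forces $\mathrm{Soc}(K) = (R/I)_s$, i.e.\ the top socle of $R/I$ lies inside $K = J/I$. This is precisely where the Socle Lemma enters: the Kustin--Ulrich lemma bounds the degrees in which the socle of $K$ can appear in terms of the generating degrees of $K$ as a submodule (the socle of a nonzero submodule cannot be too high relative to its minimal generators), and comparing that bound against the fact that $\mathrm{Soc}(K)$ is forced up into top degree $s$ yields a contradiction unless $K = 0$.

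Concretely, I would phrase the final step as follows: the class of $(R/I)_s$ maps to zero in $R/J$, so the unique (up to scalar) socle generator $F$ of $R/I$ lies in $J/I$. But then consider the submodule of $R/I$ generated by $F$: it equals $(R/I)_s$ (a simple module in degree $s$), and it is contained in $J/I$. Dually, this says $\mathrm{Ann}_{R/I}(F) \supseteq$ something strictly larger; translating back, $(R/I)/(J/I) = R/J$ would have socle degree strictly less than $s$ because killing the top-degree socle element drops the regularity. That contradicts $s(R/J) = s$. Hence no nonzero $K$ exists, so $I = J$.

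The main obstacle is getting the bookkeeping on the Socle Lemma exactly right: the Kustin--Ulrich lemma is stated for a module and a submodule with a hypothesis relating socle degrees to top generating degrees, and one must verify that our $K = J/I \hookrightarrow R/I$ satisfies those hypotheses — in particular that the relevant ``no socle above the generator degrees'' conclusion applies. If $K$ is generated in degrees $\le s-1$ (which it is, since $K_s \subseteq (R/I)_s = \mathrm{Soc}(R/I)$ is annihilated by the maximal ideal and any generator of $K$ in degree $s$ would itself be a socle element, handled separately), then $\mathrm{Soc}(K)$ cannot reach degree $s$, contradiction; and the degree-$s$ generator case is ruled out directly because such a generator would be a socle element of $R/I$ not annihilated by the quotient, forcing $s(R/J) < s$. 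Once that case analysis is clean, the lemma follows immediately.
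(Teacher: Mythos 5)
Your proof is correct, but the route you actually take differs from the paper's: the paper disposes of the lemma in one line by citing the Kustin--Ulrich result \cite[Lemma 1.1]{KuUl} together with the remark that $\mathrm{Soc}(R/I)$ and $\mathrm{Soc}(R/J)$ are one-dimensional in the same degree, whereas the argument you write out is a self-contained elementary one that never genuinely uses that lemma. Your decisive chain is: if $K=J/I\neq 0$, then $\mathrm{Soc}(K)\neq 0$ and $\mathrm{Soc}(K)\subseteq \mathrm{Soc}(R/I)=(R/I)_s$, which is one-dimensional, so $(R/I)_s\subseteq J/I$; hence $(R/J)_s=0$, and since also $(R/J)_d=0$ for all $d>s$, one gets $s(R/J)<s=s(R/I)$, a contradiction. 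That is complete as it stands, and it amounts to the transparent fact that the socle of an Artinian Gorenstein ring is its unique minimal nonzero ideal, so every nonzero ideal of $R/I$ contains $(R/I)_s$ --- a reasonable trade for the paper's black-box citation. Two caveats, neither affecting correctness: your paraphrase of the Socle Lemma as a bound on the socle degrees of $K$ in terms of its generating degrees is not what carries your argument and is questionable as stated, and the closing claim that a submodule generated in degrees $\le s-1$ cannot have socle in degree $s$ is false in general (take $K$ to be the image of $\langle x_0,\ldots,x_n\rangle$ in $R/I$ when $s\ge 2$); both are dead weight, since the contradiction is already secured the moment you observe $(R/J)_s=0$.
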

\begin{proof} The proof follows immediately from the cited lemma. Just observe that in the case of Gorenstein Artinian rings, $Soc(R/I)$ and $Soc(R/J)$ are one-dimensional graded vector spaces generated in degrees $s(R/I)$ and $s(R/J)$, respectively. And these degrees are equal from hypotheses.
\end{proof}

\bigskip

Let $Z=\{P_1,\ldots,P_m\}\subset \mathbb P^n$ be a reduced zero-dimensional scheme. Let $I_Z\subset R:=\mathbb K[x_0,\ldots,x_n]$ be the ideal of $Z$, and assume $reg(R/I_Z)=r$. Suppose that $P_i=[a_{i,0},a_{i,1},\ldots,a_{i,n}]$, for $i=1,\ldots,m$, and for each $P_i$ consider the associated (dual) linear form $$L_i=a_{i,0}y_0+a_{i,1}y_1+\cdots+a_{i,n}y_n\in S:= \mathbb K[y_0,\ldots,y_n].$$

\begin{thm}\label{thm:main_theorem} If $Z$ is arithmetically Gorenstein and if $\ell\in R$ is a linear form such that $\ell(P_i)\neq 0$ for all $i=1,\ldots,m$ (i.e., $\ell$ is a non-zero divisor in $R/I_Z$), then $$\langle I_Z,\ell\rangle = Ann(c_1L_1^r+\cdots+c_mL_m^r),$$ where $c_i$'s are the unique (up to multiplication by a non-zero scalar) non-zero constants such that $$c_1\ell(P_1)L_1^{r-1}+\cdots+c_m\ell(P_m)L_m^{r-1}=0.$$

Conversely, let $F=c_1L_1^r+\cdots+c_mL_m^r\in S, c_i\neq 0$ be a form of degree $r$ such that
\begin{enumerate}
  \item $\dim_{\mathbb K}{\rm Span}_{\mathbb K}\langle L_1^r,\ldots,L_m^r\rangle=m$, and
  \item there exist the non-zero constants $d_1,\ldots,d_m$, unique up to multiplication by a non-zero scalar such that $d_1L_1^{r-1}+\cdots+d_mL_m^{r-1}=0$.

\end{enumerate} Let $Z\subset \mathbb P^n$ be the set of points dual to the linear forms $L_i$. Assuming that $Z$ has symmetric $h-$vector, then $Z$ is arithmetically Gorenstein of regularity $r$. Furthermore, $Ann(F)=\langle I_Z,\tilde{\ell}\rangle$, for some linear form $\tilde{\ell}\in R$, if and only if $\tilde{\ell}(P_i)=\frac{d_i}{c_i}$, for all $P_i\in Z$.
\end{thm}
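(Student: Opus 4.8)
The plan is to prove the two directions separately, treating the forward direction as the core and the converse as a mirror-image argument that additionally invokes the Davis-Geramita-Orecchia characterization.

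For the forward direction, suppose $Z$ is arithmetically Gorenstein with $\reg(R/I_Z)=r$ and $\ell$ is a nonzerodivisor on $R/I_Z$. First I would set $J:=\langle I_Z,\ell\rangle$ and observe that $R/J$ is the Artinian reduction of $R/I_Z$, hence Artinian Gorenstein with socle degree $r$ (this is where $\reg=r$ enters: $s(R/J)=\reg(R/I_Z)=r$). Next, by the Emsalem-Iarrobino Theorem, $(I_Z^{-1})_r=\mathrm{Span}_{\mathbb K}\langle L_1^r,\ldots,L_m^r\rangle$ has dimension $HF(R/I_Z,r)=m$ (the Hilbert function of a reduced set of $m$ points stabilizes at $m$, and by $r=\reg$ it has reached $m$ by degree $r$), so the $L_i^r$ are linearly independent. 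Consequently, for any choice of scalars $c_i$, the form $F=\sum c_i L_i^r$ has $I_Z\subseteq \Ann(F)$; one checks $f\circ F=0$ for $f\in (I_Z)_j$ with $j\le r$ because $f\circ L_i^r$ is proportional to $(f\circ L_i^j)\cdot L_i^{r-j}=(f(P_i))\cdot(\text{stuff})\cdot L_i^{r-j}$ — more precisely $f\circ L_i^r = \frac{r!}{(r-j)!}f(P_i)L_i^{r-j}$ for $f$ homogeneous of degree $j$, and $f(P_i)=0$. Now I must bring in $\ell$: the key computation is that $\ell\circ F = \sum c_i \ell(P_i) r\, L_i^{r-1}$ up to the scalar $r$, so $\ell\in\Ann(F)$ precisely when $\sum c_i\ell(P_i)L_i^{r-1}=0$. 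The existence and uniqueness (up to scalar) of nonzero such $c_i$ is then a linear-algebra fact: the linear forms $L_1^{r-1},\ldots,L_m^{r-1}$ span a space of dimension $HF(R/I_Z,r-1)=m-1$ (again Emsalem-Iarrobino, together with the Cayley-Bacharach property of the Gorenstein $Z$, which forces $HF(R/I_Z,r-1)=m-1$), so their kernel is one-dimensional, and by Cayley-Bacharach applied to each $Z\setminus\{P_i\}$ no coordinate of a generator of that kernel can vanish, whence all $c_i\neq 0$. Finally, with such $c_i$, we get $J=\langle I_Z,\ell\rangle\subseteq\Ann(F)$; since $R/J$ is Gorenstein Artinian of socle degree $r$ and $R/\Ann(F)$ is Gorenstein Artinian of socle degree $\deg F=r$, Lemma~\ref{lem:simple_lemma} gives $\langle I_Z,\ell\rangle=\Ann(F)$.

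For the converse, let $F=\sum c_i L_i^r$ with the $c_i\neq 0$ and hypotheses (1), (2), and let $Z$ be the dual point set with symmetric $h$-vector. Hypothesis (1) says $\dim(I_Z^{-1})_r=m$, so by Emsalem-Iarrobino $HF(R/I_Z,r)=m$; hypothesis (2) gives a nontrivial relation among the $L_i^{r-1}$, so $HF(R/I_Z,r-1)\le m-1$, and since $Z$ is $m$ reduced points whose Hilbert function is strictly increasing until it reaches $m$, we get $HF(R/I_Z,r-1)=m-1$ and that the $h$-vector of $Z$ is nonzero exactly in degrees $0,\ldots,r$. The uniqueness up to scalar in (2) means $\dim\ker=1$, i.e.\ exactly one relation, which (combined with the standard fact that a single linear relation among the $(r-1)$-st powers of duals of points corresponds to the failure of imposing independent conditions at a single subscheme of degree $m-1$) forces the Cayley-Bacharach property: for every $P_i$, omitting it does not drop the Hilbert function in degree $r-1$. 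Together with the assumed symmetry of the $h$-vector, Davis-Geramita-Orecchia yields that $Z$ is arithmetically Gorenstein, and since its $h$-vector vanishes first in degree $r+1$, $\reg(R/I_Z)=r$. For the last equivalence, suppose $\Ann(F)=\langle I_Z,\tilde\ell\rangle$. Then $\tilde\ell$ is a nonzerodivisor on $R/I_Z$ (else $\langle I_Z,\tilde\ell\rangle$ would not be Artinian, or would have the wrong socle degree), so the forward direction applies and tells us $\tilde\ell(P_i)=c_i'/c_i$ where the $c_i'$ are the (unique up to scalar) constants with $\sum c_i'\tilde\ell(P_i)L_i^{r-1}=0$ — but wait, I should invert this cleanly: $\tilde\ell\in\Ann(F)$ forces $\tilde\ell\circ F=0$, i.e.\ $\sum c_i\tilde\ell(P_i)L_i^{r-1}=0$, so $(c_i\tilde\ell(P_i))_i$ is a scalar multiple of the unique relation vector $(d_i)_i$, giving $c_i\tilde\ell(P_i)=\lambda d_i$, hence $\tilde\ell(P_i)=\lambda d_i/c_i$; absorbing $\lambda$ by rescaling, $\tilde\ell(P_i)=d_i/c_i$. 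Conversely if $\tilde\ell(P_i)=d_i/c_i$ for all $i$ (which is realizable by a linear form precisely because the points are in $\mathbb P^n$ and one can interpolate values, noting $\tilde\ell$ exists and is a nonzerodivisor since all $d_i/c_i\neq 0$), then $\tilde\ell\circ F=0$ and the socle-degree argument via Lemma~\ref{lem:simple_lemma} again upgrades $\langle I_Z,\tilde\ell\rangle\subseteq\Ann(F)$ to equality.

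The main obstacle, I expect, is the passage from the purely numerical relation in hypothesis~(2) — one relation, with all coefficients nonzero — to the Cayley-Bacharach property of $Z$ in the converse, and symmetrically, in the forward direction, deducing from the Gorenstein (hence Cayley-Bacharach) hypothesis both that $HF(R/I_Z,r-1)=m-1$ \emph{and} that the unique relation among the $L_i^{r-1}$ has all coefficients nonzero. Making the dictionary "relations among $(r-1)$-st powers of dual forms $\leftrightarrow$ subschemes failing to impose independent conditions in degree $r-1$" precise, and verifying that Cayley-Bacharach is exactly the statement that no such relation is supported on a proper subset, is the technical heart; everything else is Emsalem-Iarrobino plus the socle-degree rigidity of Lemma~\ref{lem:simple_lemma}. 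One should also be careful that the interpolating linear form $\tilde\ell$ in the converse genuinely exists in $R_1$ — this needs that the assignment $P_i\mapsto d_i/c_i$ is the restriction of a global linear form, which holds because $Z$ imposes independent conditions on linear forms only if $m\le n+1$; in general one only knows $\tilde\ell$ exists when such a form does, so the statement is correctly phrased as an "if and only if" conditional on $\Ann(F)$ having the form $\langle I_Z,\tilde\ell\rangle$ at all.
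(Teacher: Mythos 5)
Your proposal is correct and takes essentially the same route as the paper: the ``folk'' computation $f\circ L_i^r=\frac{r!}{(r-d)!}\,f(P_i)\,L_i^{r-d}$ giving $I_Z\subseteq Ann(c_1L_1^r+\cdots+c_mL_m^r)$, the Emsalem--Iarrobino identification of $HF(R/I_Z,j)$ with $\dim_{\mathbb K}{\rm Span}_{\mathbb K}\langle L_1^j,\ldots,L_m^j\rangle$, the Cayley--Bacharach property (via Davis--Geramita--Orecchia) to get that the unique relation among the $L_i^{r-1}$ has all coefficients nonzero, Lemma \ref{lem:simple_lemma} to promote $\langle I_Z,\ell\rangle\subseteq Ann(F)$ to equality, and the same ingredients run backwards (plus the symmetric $h$-vector hypothesis) for the converse and the final equivalence. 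The only repairs needed are in two side justifications: in the forward direction $HF(R/I_Z,r-1)=m-1$ follows from the Gorenstein property of the Artinian reduction (the paper uses that $HF(R/\langle I_Z,\ell\rangle,r)=1$ and $\ell$ is a nonzerodivisor), not from Cayley--Bacharach alone, and in the converse it follows from the uniqueness assumption in hypothesis (2), not from the Hilbert function of reduced points being ``strictly increasing,'' since strict growth does not force increments of exactly $1$.
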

\begin{proof} For the direct implication, if we denote with $F=c_1L_1^r+\cdots+c_mL_m^r$, it is enough to show $\langle I_Z,\ell\rangle \subseteq Ann(F)$. If we know this inclusion, then the equality follows immediately from Lemma \ref{lem:simple_lemma}.

\medskip

Let $f\in I_Z$ of degree $d$. Then $f\circ L_j^r=0$ for all $j=1,\ldots,m$. This is obvious if $d\geq r+1$. Otherwise, suppose $d\leq r$, and suppose $f=\sum_{i_0+\cdots+i_n=d} \alpha_{i_0,\ldots,i_n} x_0^{i_0}\cdots x_n^{i_n}.$

Observe that $x_0^{i_0}\cdots x_n^{i_n}\circ L_j^r=x_1^{i_1}\cdots x_n^{i_n}\circ \frac{r!}{(r-i_0)!}a_{j,0}^{i_0}L_j^{r-i_0}= \cdots=\frac{r!}{(r-d)!}a_{j,0}^{i_0}\cdots a_{j,n}^{i_n}L_j^{r-d}.$ So $$f\circ L_j^r=\frac{r!}{(r-d)!}f(P_j)L_j^{r-d}=0.$$ We obtain $$I_Z\subset Ann(c_1L_1^r+\cdots+c_mL_m^r),\mbox{ for any constants }c_i\in\mathbb K.$$ To mention here that this argument (often referred to as a ``folk-theorem''; e.g. \cite[Page 183]{Re}) is at the base of all the important results in the theory of inverse systems, as can be observed, for example, in \cite{Ge} and \cite{IaKa}.

\medskip

Now we need to show that there exist the nonzero constants $c_1,\ldots,c_m\in \mathbb K$, unique up to multiplication by a non-zero scalar, such that $$\ell \circ (c_1L_1^r+\cdots+c_mL_m^r) =0.$$ Equivalently, we have to find the unique, non-zero $d_i$ such that $$\underbrace{c_1\ell(P_1)}_{d_1}L_1^{r-1}+\cdots+\underbrace{c_m\ell(P_m)}_{d_m}L_m^{r-1}=0.$$

From Emsalem-Iarrobino Theorem one has $$HF(R/I_Z,j)=\dim_{\mathbb K}{\rm Span}_{\mathbb K}\langle L_1^j,\ldots,L_m^j\rangle.$$

$\bullet$ Since $r=reg(R/I_Z)$, then $HF(R/I_Z,r)=\deg(Z)=m$, giving $$\dim_{\mathbb K}{\rm Span}_{\mathbb K}\langle L_1^r,\ldots,L_m^r\rangle=m.$$

$\bullet$ Since $R/\langle I_Z,\ell\rangle$ is Artinian Gorenstein, and $\ell$ is a nonzero divisor in $R/I_Z$, then $HF(R/I_Z,r-1)=HF(R/I_Z,r)-HF(R/\langle I_Z,\ell\rangle,r)=m-1$, giving $$\dim_{\mathbb K}{\rm Span}_{\mathbb K}\langle L_1^{r-1},\ldots,L_m^{r-1}\rangle=m-1.$$

$\bullet$ Since $Z$ is arithmetically Gorenstein (reduced) finite set of points, then, from Davis-Geramita-Orecchia Theorem, $Z$ has the Cayley-Bacharach property, which via Emsalem-Iarrobino Theorem, translates into: for any $1\leq i_1<\cdots<i_{m-1}\leq m$, $L_{i_1}^{r-1},\ldots,L_{i_{m-1}}^{r-1}$ are linearly independent.

The second bullet gives the existence of the constants $d_i$ such that $d_1L_1^{r-1}+\cdots+d_mL_m^{r-1}=0$, whereas the third bullet proves that all $d_i$'s are non-zero and unique, up to multiplication by a non-zero scalar.

\vskip .2in

For the converse statement, first observe that by using Emsalem-Iarrobino Theorem , we have $HF(R/I_Z,r)=\deg(Z)=m$ from the assumption (1), and $HF(R/I_Z,r-1)\leq m-1$, from the assumption (2). So $reg(R/I_Z)=r$.

The assumption (2) also implies that for any $1\leq i_1<\cdots<i_{m-1}\leq m$, $L_{i_1}^{r-1},\ldots, L_{i_{m-1}}^{r-1}$ are linearly independent. So $Z$ has the Cayley-Bacharach property.

From Davis-Geramita-Orecchia Theorem with the assumption that $Z$ has symmetric $h-$vector one obtains that $Z$ is arithmetically Gorenstein.

The last statement is an immediate consequence of the direct implication we showed in the first part of this proof.
\end{proof}

The next two remarks put in a different perspective the conditions in the converse statement in the above theorem.

\begin{rem} \label{rem:nzd} If $Z$ is non-degenerate and if $P_i=[a_{i,0},\ldots,a_{i,n}],i=1,\ldots,m$, the existence of $\tilde{\ell}\in R_1$ such that $\tilde{\ell}(P_i)=\frac{d_i}{c_i}$ for all $i=1,\ldots,m$ is equivalent to the $m\times (n+2)$ matrix
$$
\left(
\begin{array}{cccc}
a_{1,0}  & \cdots & a_{1,n} &-d_1/c_1\\
a_{2,0}  & \cdots & a_{2,n} &-d_2/c_2\\
\vdots & & \vdots&\vdots\\
a_{m,0}  & \cdots & a_{m,n} &-d_m/c_m
\end{array}
\right)
$$ having rank $n+1$.
\end{rem}

\begin{rem} \label{rem:sym} Denote the $\mathbb K-$vector space spanned by the partial derivatives of order $r-j$ of $F$ with ${\rm D}^{r-j}(F)$. By the shape of $F$, it is obvious that ${\rm D}^{r-j}(F)$ is a subspace of ${\rm Span}_{\mathbb K}\langle L_1^j,\ldots,L_m^j\rangle$.

If $$\dim_{\mathbb K} {\rm D}^{r-j}(F)= \dim_{\mathbb K} {\rm Span}_{\mathbb K}\langle L_1^j,\ldots,L_m^j\rangle - \dim_{\mathbb K} {\rm Span}_{\mathbb K}\langle L_1^{j-1},\ldots,L_m^{j-1}\rangle,$$ for all $j=0,\ldots r$, then $Z$ has symmetric $h-$vector.
\end{rem}

\medskip

\begin{exm} Consider $F=y_2^2+(y_0+y_1+y_2)^2-(y_0+y_2)^2-(y_1+y_2)^2$. Then $$Z=\{[0,0,1],[1,1,1],[1,0,1],[0,1,1]\}\subset\mathbb P^2.$$ Observe that $$\underbrace{y_2}_{L_1}+ \underbrace{(y_0+y_1+y_2)}_{L_2}-\underbrace{(y_0+y_2)}_{L_3}-\underbrace{(y_1+y_2)}_{L_4}=0$$ and that any three of the linear forms $L_1,L_2,L_3,L_4$ are linearly independent.

The rank of $
\left(
\begin{array}{cccc}
0  & 0 & 1 &-1\\
1  & 1 & 1 &-1\\
1 & 0 & 1&-1\\
0 & 1 & 1 &-1
\end{array}
\right)
$ is precisely $3$. The kernel of the matrix consists of vectors $\left(\begin{array}{cccc} 0 & 0 & a & a \end{array}\right)$, giving $\ell = x_2$.

So $\langle I_Z, x_2\rangle=Ann(F)=\langle x_0^2,x_1^2,x_2\rangle$, which is in line with the fact that $I_Z=\langle x_0(x_0-x_2),x_1(x_1-x_2)\rangle$.
\end{exm}

\vskip .2in

\textbf{Acknowledgements.} I would like to thank Anthony Iarrobino and Hal Schenck for the reference suggestions. Also I would like to thank Juan Migliore for the discussions we had on the subject, back in 2010. I am very grateful to the anonymous referee for important corrections and suggestions.

\renewcommand{\baselinestretch}{1.0}
\small\normalsize 

\bibliographystyle{amsalpha}

\end{document}